\newtheorem{theorem}{Theorem}[section]
\newtheorem{corollary}[theorem]{Corollary}
\newtheorem{proposition}[theorem]{Proposition}
\theoremstyle{definition}
\numberwithin{equation}{section}
\title{The triangular numbers are finitely stable}
\address{Instituto Federal de Educação, Ciência e Tecnologia de São Paulo, campus Itaquaquecetuba. IFSP - ITQ: Rua Primeiro de Maio, 500, Itaquaquecetuba - SP, Brazil. CEP: 08571-050}
\email{luan.ferreira@ifsp.edu.br}
\begin{document}

\maketitle

\begin{center} \large{Luan Alberto Ferreira} \end{center}

\begin{abstract}
    This small note proves that the set of triangular numbers is a finitely stable additive basis. This, together with a previous result by the author, shows that triangular numbers and squares are, among all polygonal numbers, the only ones that are finitely stable additive bases.
\end{abstract}

\section{Introduction}

If $A$, $B \subseteq \mathbb{N} = \{0, 1, 2, 3, 4, 5, \ldots\}$, we define \[A + B = \{a + b; \ a \in A, \ b \in B\}.\]

Also, if $h \in \mathbb{N}$, $h \neq 0$, then we write \[hA = \underbrace{A + \cdots + A}_{h \text{ times}} = \{a_1 + \cdots + a_h; \ a_1, \ldots, a_h \in A\}.\]

If there is $h \in \mathbb{N}$, $h \neq 0$, such that $hA = \mathbb{N}$, then $A$ is said to be an additive basis. The smallest $h \in \mathbb{N}$ satisfying this equality is called the order of $A$, and is denoted by $h = o(A)$. For example, Lagrange's four-square theorem says that the set of the squares $\mathbb{N}^2 = \{0, 1, 4, 9, 16, 25, \ldots\}$ is an additive basis whose order is four, i.e., $o(\mathbb{N}^2) = 4$. Finally, if $A \subseteq \mathbb{N}$ and $n \in \mathbb{N}$, then we write \[A(n) = \#\{a \in A; \ 1 \leq a \leq n\}.\]

In \cite{Ferreira}, it was defined that an additive basis $A$ is \emph{finitely stable} if the order of $A$ is equal the order of $A \cup F$, for all finite subset $F \subseteq \mathbb{N}$, and it was proved the following theorem:

\begin{theorem} \label{luan}

Let $A$ be an additive basis such that $o(A) = h \geq 3$. If \[\displaystyle \lim_{n \rightarrow \infty} \frac{((h - 2)A)(n)}{n} = 0\] and \[\limsup_{n \to \infty} \displaystyle \frac{((h - 1)A)(n)}{n} < 1,\] then $A$ is finitely stable.

\end{theorem}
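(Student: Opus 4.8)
The plan is to prove the stronger, explicit statement that $o(A \cup F) = h$ for every finite $F \subseteq \mathbb{N}$; since $A$ is finitely stable by definition once this holds, that finishes the theorem. One inequality is immediate: from $A \subseteq A \cup F$ we get $hA \subseteq h(A \cup F) \subseteq \mathbb{N}$, and $hA = \mathbb{N}$ forces $h(A \cup F) = \mathbb{N}$, so $o(A \cup F) \le h$. The entire content is therefore the reverse inequality $o(A \cup F) \ge h$, i.e.\ that adjoining the finitely many elements of $F$ cannot produce a basis of order $h-1$. Concretely, I would show $(h-1)(A \cup F) \ne \mathbb{N}$.

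To unpack $(h-1)(A \cup F)$ I would first record that $0 \in A$: since $0 \in \mathbb{N} = hA$, a sum of $h$ nonnegative integers equal to $0$ must have all summands $0$, so $0 \in A$. This lets me pad representations, giving $kA \subseteq (h-2)A$ for every $0 \le k \le h-2$. Now each element of $(h-1)(A \cup F)$ is a sum of $h-1$ terms, each lying in $A$ or in $F$; sorting by the number $j$ of summands taken from $F$ (with the conventions $0F = 0A = \{0\}$, and $jF$ the finite set of sums of exactly $j$ elements of $F$) yields
\[(h-1)(A \cup F) = \bigcup_{j=0}^{h-1} \bigl( jF + (h-1-j)A \bigr).\]

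The density estimate then splits along $j$. The term $j=0$ is exactly $(h-1)A$, for which $\limsup_n ((h-1)A)(n)/n < 1$ by hypothesis. For each $j \ge 1$ one has $h-1-j \le h-2$, so $(h-1-j)A \subseteq (h-2)A$ and hence $jF + (h-1-j)A \subseteq jF + (h-2)A$; as $jF$ is finite, this is a finite union of translates of $(h-2)A$, and since $\lim_n ((h-2)A)(n)/n = 0$ every translate is density zero, so $S := \bigcup_{j=1}^{h-1}\bigl(jF + (h-1-j)A\bigr)$ is density zero. Writing $(h-1)(A \cup F) = (h-1)A \cup S$ and using the subadditivity $(B \cup C)(n) \le B(n) + C(n)$ of the counting function gives $\limsup_n ((h-1)(A \cup F))(n)/n \le \limsup_n ((h-1)A)(n)/n < 1$. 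Thus the upper density of $(h-1)(A \cup F)$ is strictly below $1$; were it equal to $\mathbb{N}$ its counting function would be exactly $n$ with density $1$, a contradiction. Hence $(h-1)(A \cup F) \ne \mathbb{N}$, so $o(A \cup F) \ge h$, and combined with the free inequality $o(A \cup F) = h$.

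I expect the main conceptual step to be the decomposition together with the observation that any configuration of $h-1$ summands using at least one element of $F$ consumes at most $h-2$ elements of $A$, and therefore contributes only a translate of the density-zero set $(h-2)A$; this is precisely where the structure of the hypotheses is exploited, each being used exactly once (the $(h-2)A$ hypothesis annihilating all mixed terms, the $(h-1)A$ hypothesis keeping the pure-$A$ term away from full density). The remaining work — verifying $0 \in A$, that finitely many translates of a density-zero set stay density zero, and the subadditivity of the counting function — is routine and should present no real obstacle.
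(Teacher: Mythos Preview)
The paper does not actually prove this theorem; it is quoted from the author's earlier article \cite{Ferreira} and merely cited here as a tool. So there is no in-paper argument to compare your proposal against.

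That said, your argument is correct and is the natural proof. The decomposition
\[
(h-1)(A\cup F)=\bigcup_{j=0}^{h-1}\bigl(jF+(h-1-j)A\bigr)
\]
is exactly the right way to isolate the pure-$A$ term from the mixed terms, and the two hypotheses are used precisely as you describe: the density-zero condition on $(h-2)A$ kills every mixed term (each being a finite union of translates of a subset of $(h-2)A$), while the $\limsup$ condition keeps the pure term $(h-1)A$ bounded away from full density. Subadditivity of the counting function then gives upper density strictly below~$1$ for $(h-1)(A\cup F)$, so it cannot equal $\mathbb{N}$. Your observation that $0\in A$ (forced by $0\in hA$) is needed both for padding $kA\subseteq(h-2)A$ and for the monotonicity $k(A\cup F)\subseteq(h-1)(A\cup F)$, so that ruling out order $h-1$ automatically rules out smaller orders as well. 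Nothing is missing.
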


Using this result, it was shown that the set of the squares $\mathbb{N}^2$ is a finitely stable additive basis. But theorem \ref{luan} also implies that the set of triangular numbers $T = \{0, 1, 3, 6, 10, \ldots\}$ is finitely stable. The purpose of this humble note is to prove this and to show that among all polygonal numbers, the triangular numbers and the squares are the only ones that are finitely stable additive bases.

\section{The result}

For the main result we will need the following facts. For completeness, a proof of the second will be given.

\begin{theorem}[Gauss' num $= \Delta + \Delta + \Delta$ theorem]

The set of the triangular numbers $T$ is an additive basis of order $3$. 

\end{theorem}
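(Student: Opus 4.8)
The plan is to reduce the statement to the Gauss--Legendre three-square theorem, which asserts that a positive integer is a sum of three integer squares precisely when it is \emph{not} of the form $4^a(8b+7)$ with $a, b \in \mathbb{N}$. The key elementary identity is $8T_k + 1 = (2k+1)^2$, where $T_k = k(k+1)/2$ denotes the $k$-th triangular number; adding three copies of it gives
\[
8(T_a + T_b + T_c) + 3 = (2a+1)^2 + (2b+1)^2 + (2c+1)^2
\]
for all $a, b, c \in \mathbb{N}$, so that $n \in 3T$ exactly when $8n+3$ can be written as a sum of three \emph{odd} squares.

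To prove $3T = \mathbb{N}$, I would fix $n \in \mathbb{N}$ and note that $8n + 3 \equiv 3 \pmod 8$, so $8n+3$ is not divisible by $4$ and hence not of the excluded form; the three-square theorem then produces $x, y, z \in \mathbb{Z}$ with $8n + 3 = x^2 + y^2 + z^2$. Since every square is congruent to $0$, $1$, or $4$ modulo $8$, the residue $3$ can only be obtained as $1 + 1 + 1$, forcing $x, y, z$ all odd. Replacing each by its absolute value I may assume $x, y, z > 0$, write $x = 2a+1$, $y = 2b+1$, $z = 2c+1$ with $a, b, c \in \mathbb{N}$, and substitute back into the displayed identity to obtain $n = T_a + T_b + T_c$. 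Hence $3T = \mathbb{N}$.

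It then remains only to rule out orders $1$ and $2$. Since $2 \notin T$ we have $T \neq \mathbb{N}$; and a glance at the small triangular numbers $0, 1, 3, 6$ shows that $5$ is not a sum of two of them, so $2T \neq \mathbb{N}$. Therefore $o(T) = 3$, as claimed.

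The only genuine obstacle here is the three-square theorem itself, a classical but decidedly nontrivial result; I would simply cite it from a standard additive number theory reference rather than reprove it, after which the entire argument is elementary modular bookkeeping.
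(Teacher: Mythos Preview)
Your argument is correct and is precisely the classical reduction to the three-square theorem via the identity $8T_k+1=(2k+1)^2$; this is the standard proof found in the reference the paper cites (Nathanson, \emph{Additive Number Theory: The Classical Bases}, \S1.7). The paper itself gives no argument and simply defers to that reference, so your write-up in fact supplies the content behind the citation.
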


\begin{proof} See \cite{Nathansonbook}, chapter $1$, section $1.7$. \end{proof}

\begin{proposition}

If $n \equiv 5$ or $8 \pmod 9$, then $n$ is not the sum of two triangular numbers.

\end{proposition}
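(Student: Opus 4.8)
The plan is to reduce the whole statement to a finite computation modulo $9$. First I would record that the residue of a triangular number $T_k = \frac{k(k+1)}{2}$ modulo $9$ depends only on $k$ modulo $9$: indeed
\[
T_{k+9} - T_k = \frac{(k+9)(k+10) - k(k+1)}{2} = 9k + 45,
\]
which is divisible by $9$. Consequently the set of residues of triangular numbers modulo $9$ is found by evaluating $T_0, T_1, \ldots, T_8$, and one gets the four values $\{0, 1, 3, 6\}$ (for instance $T_0 = 0$, $T_1 = 1$, $T_2 = 3$, $T_3 = 6$, $T_4 \equiv 1$, and so on).

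Next I would form every possible sum of two such residues. There are only finitely many pairs, and a direct enumeration gives
\[
\{\, r + s \bmod 9 : r, s \in \{0,1,3,6\} \,\} = \{0, 1, 2, 3, 4, 6, 7\}.
\]
Neither $5$ nor $8$ occurs in this list. Since any sum $T_a + T_b$ of two triangular numbers is congruent modulo $9$ to a sum of two elements of $\{0,1,3,6\}$, it can never be congruent to $5$ or to $8$ modulo $9$, which is precisely the claim.

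If one prefers to invoke a classical fact rather than a bare table, there is an alternative route via the identity $8(T_a + T_b) + 2 = (2a+1)^2 + (2b+1)^2$: writing $n$ as a sum of two triangular numbers would force $8n + 2$ to be a sum of two squares. Reducing modulo $9$ and using that squares modulo $9$ lie in $\{0,1,4,7\}$ (so that a sum of two squares avoids $3$ and $6$ modulo $9$) yields the same conclusion, since $n \equiv 5 \pmod 9$ gives $8n + 2 \equiv 6 \pmod 9$ and $n \equiv 8 \pmod 9$ gives $8n + 2 \equiv 3 \pmod 9$.

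Both arguments are finite verifications, so there is no genuine obstacle; the only points demanding care are stating the periodicity cleanly (so that checking $k = 0, \ldots, 8$ really suffices) and not miscomputing the table of residues and of pairwise sums.
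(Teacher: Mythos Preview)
Your argument is correct and is essentially the same as the paper's: both reduce to a finite check modulo $9$, the paper by clearing the denominator and asserting that $x(x+1)+y(y+1)\equiv 1\pmod 9$ (respectively $\equiv 7$) has no solution, while you compute the residue set $\{0,1,3,6\}$ of triangular numbers directly—these differ only by the invertible factor $2$ modulo $9$. Your explicit periodicity justification and the alternative via $8n+2=(2a+1)^2+(2b+1)^2$ are pleasant extras that the paper omits.
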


\begin{proof} Suppose by absurd that $n \equiv 5 \pmod 9$ is the sum of two triangular numbers. Then there exists $m$, $x$, $y \in \mathbb{N}$ such that \[9m + 5 = \frac{x(x+1)}{2} + \frac{y(y+1)}{2}.\] This implies \[18m + 10 = x(x+1) + y(y+1).\] Taking this equation mod $9$, \[1 \equiv x(x+1) + y(y+1) \pmod 9.\] But this equation does not have a solution mod $9$, contradiction. Analogously for the case $n \equiv 8 \pmod 9$. \end{proof}

\begin{corollary}

The triangular numbers $T$ is a finitely stable additive basis.

\end{corollary}

\begin{proof} Since $o(T) = 3$, the previous proposition shows that the hypotheses of theorem \ref{luan} are fulfilled for $T$. Then $T$ is a finitely stable additive basis. \end{proof}

This corollary, together with the next two results, show that the only polygonal numbers that are finitely stable additive bases are the squares $\mathbb{N}^2$ and triangular numbers $T$.

\begin{theorem} [Fermat-Cauchy polygonal number theorem]

Let $k \in \mathbb{N}$, $k \geq 3$. If $\ \mathbb{N}_k$ denotes the set of the $k$-gonal numbers, then $o(\mathbb{N}_k) = k$.
    
\end{theorem}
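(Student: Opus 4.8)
The plan is to establish the two inequalities $o(\mathbb{N}_k) \le k$ and $o(\mathbb{N}_k) \ge k$ separately. The upper bound is precisely the content of Cauchy's polygonal number theorem: every nonnegative integer is a sum of at most $k$ $k$-gonal numbers; since $0 \in \mathbb{N}_k$, this gives $k\mathbb{N}_k = \mathbb{N}$, hence $o(\mathbb{N}_k) \le k$. In keeping with the expository spirit of this note, I would not reproduce this deep theorem but simply cite it (e.g.\ \cite{Nathansonbook}), just as was done above for Gauss' theorem. This citation is the only substantial input; everything else is elementary.

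For the lower bound I would exhibit a natural number that is not a sum of $k-1$ $k$-gonal numbers. Writing $p_k(m) = \frac{(k-2)m(m-1)}{2} + m$ for the $m$-th $k$-gonal number, one has $p_k(0) = 0$, $p_k(1) = 1$, $p_k(2) = k$, and $p_k(3) = 3k - 3$, which already exceeds $2k - 1$ whenever $k \ge 3$. Hence the only $k$-gonal numbers not exceeding $n := 2k - 1$ are $0$, $1$, and $k$, so any representation of $n$ as a sum of at most $k-1$ $k$-gonal numbers would be a representation of $2k-1$ as a sum of at most $k-1$ elements of $\{0, 1, k\}$. A short case analysis on the number $j$ of summands equal to $k$ rules this out: if $j \ge 2$ the sum is at least $2k > n$; if $j = 1$ the remaining (at most) $k - 2$ summands, each $0$ or $1$, would have to total $k - 1$, which is impossible; and if $j = 0$ the sum is at most $k - 1 < n$. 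Therefore $(k-1)\mathbb{N}_k \ne \mathbb{N}$, so $o(\mathbb{N}_k) \ge k$, and combined with the upper bound, $o(\mathbb{N}_k) = k$.

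The only real obstacle is the upper bound — Cauchy's theorem itself — and I expect it to be dispatched by a reference rather than a proof; the lower bound needs nothing beyond the explicit formula for $p_k(m)$ and the casework just sketched. One minor point to watch is the meaning of $h\mathbb{N}_k$: because $0$ is a $k$-gonal number, a sum of exactly $h$ of them is the same as a sum of at most $h$ of them, and this is what makes the case analysis for $n = 2k-1$ legitimate (and what let us restrict attention to the summands $0$, $1$, $k$). It is also worth noting in passing that the instance $k = 3$ recovers the fact, implicit above, that $5 \not\in 2T$, and $k = 4$ recovers $7 \not\in 3\mathbb{N}^2$.
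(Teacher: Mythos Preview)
Your proposal is correct. The paper's own proof consists solely of the citation ``See \cite{Nathansonarticle}'', so it defers both inequalities to the reference without any argument at all. Your approach agrees with the paper on the only substantive point---invoking Cauchy's theorem for the upper bound $o(\mathbb{N}_k)\le k$---but goes further by supplying a self-contained elementary argument for the lower bound via the witness $2k-1$, which the paper omits. That extra work is sound: the computation $p_k(3)=3k-3>2k-1$ for $k\ge 3$ is exactly what restricts the available summands to $\{0,1,k\}$, and the case analysis on the number $j$ of summands equal to $k$ is airtight. So your version is strictly more informative than the paper's, at essentially no cost.
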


\begin{proof} See \cite{Nathansonarticle}. \end{proof}

\begin{theorem} [Legendre polygonal number theorem]

Let $m \geq 3$ and $N \geq 28m^3$. If $m$ is odd, then $N$ is the sum of four polygonal numbers of order $m + 2$. If $m$ is even, then $N$ is the sum of five polygonal numbers of order $m + 2$, at least one of which is $0$ or $1$.

\end{theorem}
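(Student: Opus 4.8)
The plan is to deduce the theorem from Cauchy's classical lemma on writing an integer as a sum of four squares with a prescribed value of the linear form $s+t+u+v$; in the note I would in fact just cite the standard treatment, since this is Legendre's quantitative refinement of Cauchy's polygonal number theorem (see \cite{Nathansonbook}, chapter $1$), but the argument runs as follows. A polygonal number of order $m+2$ is $p_{m+2}(x)=\tfrac{m}{2}(x^{2}-x)+x$, so if $x_{1},\dots,x_{4}$ are nonnegative integers with $x_{1}^{2}+\cdots+x_{4}^{2}=a$ and $x_{1}+\cdots+x_{4}=b$, then $p_{m+2}(x_{1})+\cdots+p_{m+2}(x_{4})=\tfrac{m}{2}(a-b)+b$. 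Hence, to realize $N$ as a sum of four polygonal numbers of order $m+2$ (allowing, when $m$ is even, a fifth summand equal to $0$ or $1$ to repair a parity obstruction), it suffices to exhibit odd positive integers $a,b$ for which $\tfrac{m}{2}(a-b)+b$ equals the appropriate target and which satisfy the two hypotheses $b^{2}<4a$ and $3a<b^{2}+2b+4$ of Cauchy's lemma.

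I would then carry out an elementary reduction. The equation $\tfrac{m}{2}(a-b)+b=N$ forces $a=\tfrac{2(N-b)}{m}+b$, so requiring $a$ to be an odd integer confines $b$ to a fixed residue class modulo $2m$ (with a small case split on $m\bmod 4$, which is where the corrective term $0$ or $1$ is spent when $m$ is even), and substituting this value of $a$ turns the two Cauchy inequalities into the assertion that $b$ lies in an open interval $I_{N}$ with endpoints near $\sqrt{6N/m}$ and $\sqrt{8N/m}$, hence of length $\asymp\sqrt{N/m}$. The crux is then a short counting step: once $N\ge 28m^{3}$ this length exceeds $2m$, so $I_{N}$ contains an integer $b$ in the prescribed class; feeding $(a,b)$ into Cauchy's lemma produces the four squares, and hence the representation. (For the characterization announced in this note only the crude consequence is needed: every sufficiently large $N$ lies in $4\mathbb{N}_{m+2}$ if $m$ is odd and in $5\mathbb{N}_{m+2}$ if $m$ is even --- using that $0$ and $1$ are themselves polygonal --- whence, adjoining the finitely many exceptional values as a set $F$, one obtains $o(\mathbb{N}_{m+2}\cup F)<m+2=o(\mathbb{N}_{m+2})$ by the Fermat-Cauchy theorem, i.e.\ $\mathbb{N}_{m+2}$ is not finitely stable; together with the corollary above and the known finite stability of the squares this gives the stated dichotomy.)

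The main obstacle is Cauchy's lemma itself. Its proof rests on the three-square theorem: since $a$ and $b$ are odd one has $4a-b^{2}\equiv 3\pmod 8$, so $4a-b^{2}=x^{2}+y^{2}+z^{2}$ with $x,y,z$ odd; after flipping signs so that $x+y+z\equiv -b\pmod 4$, each of the four quantities $b\pm x\pm y\pm z$ having an even number of minus signs is divisible by $4$, and the identity
\[ a=\Bigl(\tfrac{b+x+y+z}{4}\Bigr)^{2}+\Bigl(\tfrac{b+x-y-z}{4}\Bigr)^{2}+\Bigl(\tfrac{b-x+y-z}{4}\Bigr)^{2}+\Bigl(\tfrac{b-x-y+z}{4}\Bigr)^{2} \]
exhibits $a$ as a sum of four squares whose roots add up to $b$ --- the inequalities $b^{2}<4a$ and $3a<b^{2}+2b+4$ being what, for a suitable choice of the three-square representation, forces those four roots to be nonnegative. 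The remaining delicate point is the residue bookkeeping modulo $2m$ that yields the explicit bound $28m^{3}$. As all of this is classical and is used here only in the crude form above, in the write-up I would state the theorem as above and refer to \cite{Nathansonbook} and \cite{Nathansonarticle} for the full proof.
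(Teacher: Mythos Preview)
Your proposal is correct and aligns with the paper's treatment: the paper does not supply an argument but simply cites \cite{Nathansonbook}, chapter~1, section~1.7, and you ultimately do the same while additionally sketching the standard Cauchy--Legendre proof (Cauchy's lemma via the three-square theorem, then the interval argument for $b$). Your sketch is faithful to the proof in \cite{Nathansonbook}, including the heuristic for the constant $28$ coming from $(\sqrt{8}-\sqrt{6})\sqrt{N/m}>2m$, so there is no substantive difference in approach.
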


\begin{proof} See \cite{Nathansonbook}, chapter $1$, section $1.7$. \end{proof}

\begin{corollary}

The only polygonal numbers that are finitely stable additive bases are the squares $\mathbb{N}^2$ and triangular numbers $T$.

\end{corollary}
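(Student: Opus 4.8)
The strategy is to show that for every $k$-gonal number set $\mathbb{N}_k$ with $k \geq 5$, one can adjoin a single element (or a finite set) so as to strictly lower the order, thereby violating finite stability; combined with the already-established corollary for $T$ and the analogous result for $\mathbb{N}^2$ from \cite{Ferreira}, this pins down exactly the two exceptions. First I would recall, via the Fermat–Cauchy theorem, that $o(\mathbb{N}_k) = k$ for every $k \geq 3$. Fix $k \geq 5$ and write $m = k - 2 \geq 3$, so $\mathbb{N}_k$ is the set of polygonal numbers of order $m+2$. The plan is to prove that $\mathbb{N}_k \cup \{1\}$ (when $1$ is not already $k$-gonal, which for $k \ge 5$ it is, so one uses $\mathbb{N}_k \cup \{2\}$ or a similarly small finite $F$) already has order at most $k - 1$, i.e. strictly smaller than $o(\mathbb{N}_k)$.

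The key input is the Legendre polygonal number theorem. For $m$ odd (so $k = m+2$ odd, $k \ge 5$), it gives that every $N \geq 28m^3$ is a sum of four $k$-gonal numbers, hence certainly a sum of at most $k - 1 \geq 4$ elements of $\mathbb{N}_k \subseteq \mathbb{N}_k \cup F$. For $m$ even (so $k$ even, $k \ge 6$), Legendre gives every $N \geq 28m^3$ as a sum of five $k$-gonal numbers one of which is $0$ or $1$; since $0, 1 \in \mathbb{N}_k \cup \{1\}$, this exhibits $N$ as a sum of five elements of $\mathbb{N}_k \cup \{1\}$, and $5 \le k - 1$ when $k \ge 6$. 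In both cases every sufficiently large $N$ lies in $(k-1)(\mathbb{N}_k \cup F)$. For the finitely many remaining $N < 28m^3$, one simply throws all of them into the finite set $F$ — this is legitimate precisely because the definition of finite stability quantifies over \emph{all} finite $F$. Thus $o(\mathbb{N}_k \cup F) \le k - 1 < k = o(\mathbb{N}_k)$, so $\mathbb{N}_k$ is not finitely stable for any $k \ge 5$.

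It then remains to close the loop at $k = 3$ and $k = 4$: the case $k = 3$ is exactly the corollary just proved for $T$, and the case $k = 4$, i.e. that $\mathbb{N}^2$ is finitely stable, is the result of \cite{Ferreira} recalled in the introduction. Hence among all polygonal numbers exactly $\mathbb{N}^2$ and $T$ are finitely stable additive bases, as claimed.

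The main obstacle is bookkeeping rather than depth: one must check the inequalities $4 \le k-1$ and $5 \le k-1$ are compatible with the parity constraints (odd $k \ge 5$ gives $k - 1 \ge 4$; even $k \ge 6$ gives $k-1 \ge 5$), and one must be slightly careful that the "$0$ or $1$" summand in Legendre's even case is available in $\mathbb{N}_k \cup F$ — $0$ always is, and $1$ can be put into $F$ if needed — so that the five-term representation genuinely lives in $(k-1)(\mathbb{N}_k \cup F)$. No heavy computation beyond invoking Legendre's theorem is required.
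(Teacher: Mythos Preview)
Your argument is correct and is exactly the paper's approach: the paper's proof simply instructs the reader to consider $\mathbb{N}_{m+2} \cup \{j \in \mathbb{N} : j < 28m^3\}$ and compare its order with $o(\mathbb{N}_{m+2})$, which is precisely the finite set $F$ you arrive at, with Legendre's theorem handling $N \ge 28m^3$ (four summands for odd $m$, five for even $m$, both $\le k-1$) and the cases $k=3,4$ covered by the earlier results. Your write-up merely spells out the parity bookkeeping that the paper leaves implicit; the side remarks about adjoining $\{1\}$ or $\{2\}$ are unnecessary since $0,1 \in \mathbb{N}_k$ already, but they do no harm.
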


\begin{proof} Fix $m \in \mathbb{N}$, $m \geq 3$. Now consider the set \[\mathbb{N}_{m + 2} \cup \{j \in \mathbb{N}; \ j < 28m^3\}\] and compare the order of this set with the order of $\mathbb{N}_{m + 2}$. \end{proof}

\end{document}